\newtheorem{thm}{Theorem}[section]
\newtheorem{lemm}[thm]{Lemma}
\newtheorem{coro}[thm]{Corollary}
\theoremstyle{definition}
\newtheorem{expl}[thm]{Example}
\newtheorem{remark}[thm]{Remark}
\begin{document}

\title{A note on cusp forms and representations of $\mathrm{SL}_2(\mathbb{F}_p)$}

\author{Zhe Chen}

\address{Department of Mathematics, Shantou University, Shantou, China}

\email{zhechencz@gmail.com}

\begin{abstract}
Cusp forms are certain holomorphic functions defined on the upper half-plane, and the space of cusp forms for the principal congruence subgroup $\Gamma(p)$, $p$ a prime, is acted on by $\mathrm{SL}_2(\mathbb{F}_p)$. Meanwhile, there is a finite field incarnation of the upper half-plane, the Deligne--Lusztig (or Drinfeld) curve, whose cohomology space is also acted on by $\mathrm{SL}_2(\mathbb{F}_p)$. In this note we compute the relation between these two spaces in the weight $2$ case.
\end{abstract}

\maketitle

\section{Introduction}\label{section:Intro}

Given a prime $p$, the cusp forms of weight $k$ for the principal congruence subgroup $\Gamma(p):=\mathrm{Ker}(\mathrm{SL}_2(\mathbb{Z})\rightarrow \mathrm{SL}_2(\mathbb{F}_p))$ form a finite dimensional linear space over $\mathbb{C}$, denoted by $S_k(\Gamma(p))$; these holomorphic functions defined on the upper half-plane are objects of considerable interest in number theory. Here we focus on the case $k=2$. The space $S_2(\Gamma(p))$ is acted on by $\mathrm{SL}_2(\mathbb{F}_p)$ in a natural way. We want to understand this space by viewing $\mathrm{SL}_2(\mathbb{F}_p)$ as a finite reductive group.

\vspace{2mm} There is a finite field analogue of the upper half-plane, $\mathbb{P}^1\backslash\mathbb{P}^1(\mathbb{F}_p)$, which is an algebraic curve over $\overline{\mathbb{F}}_p$. The group $\mathrm{SL}_2(\mathbb{F}_p)$ also acts on this curve and its $\ell$-adic cohomology in a natural way, and Drinfeld found that all the discrete series representations of $\mathrm{SL}_2(\mathbb{F}_p)$ appeared in this cohomology. This is one of the starting points of Deligne--Lusztig theory, a geometric approach to the representations of reductive groups over finite fields.

\vspace{2mm} Let $G=\mathrm{SL}_2(\overline{\mathbb{F}}_p)$ with $F$ being the standard Frobenius endomorphism on $G$ over $\mathbb{F}_p$ (so $G^F=\mathrm{SL}_2(\mathbb{F}_p)$). And let $Z$ be the centre of $G$.

\vspace{2mm}  We recall our basic objects briefly. (The details can be found e.g.\ in \cite{Diamond_et_Shurman_book2005} and \cite{Bonnafe_2011_rep_SL2_book}.) First, $S_2(\Gamma(p))$ can be identified with the space of differential $1$-forms on the modular curve in a natural way, and $G^F$ acts on $S_2(\Gamma(p))$ via this identification by M\"{o}bius transformation. Let $\overline{S_2(\Gamma(p))}$ be the dual space of $S_2(\Gamma(p))$ and denote the character of $S_2(\Gamma(p))+\overline{S_2(\Gamma(p))}$ by $S_{2,p}$. Now fix an anisotropic torus $T_a$ and fix a split torus $T_s$. Note that $T_a\cap T_s=Z$. For an irreducible character $\theta_s\in\widehat{T_s^F}$, we put $R_{T_s}^{\theta}:=\mathrm{Ind}_{B^F}^{G^F}\widetilde{\theta_s}$, where $B$ is an $F$-stable Borel subgroup containing $T_s$, and $\widetilde{\theta_s}$ is the trivial extension of $\theta_s$; this gives roughly half of the irreducible characters of $G^F$, and the other ones come from the geometry of $\mathbb{P}^1\backslash\mathbb{P}^1(\mathbb{F}_p)$: Fix a prime $\ell\neq p$; each $\theta_a\in\widehat{T_a^F}$ corresponds to an $\ell$-adic local system on $\mathbb{P}^1\backslash\mathbb{P}^1(\mathbb{F}_p)$, and the trace of alternating sum of the corresponding cohomology is a virtual character $R_{T_a}^{\theta_a}$. These $R_{T_a}^{\theta_a}$ and $R_{T_s}^{\theta_s}$ are called Deligne--Lusztig characters of $G^F$.

\vspace{2mm} We show that (see Theorem~\ref{main theorem}), the character $S_{2,p}$ of $\mathrm{SL}_2(\mathbb{F}_p)$ is a sum of Deligne--Lusztig characters with interesting coefficients: The coefficients are linear polynomials in $p$ and only depend on the residue of $p$ modulo $12$. Moreover (see Corollary~\ref{corollary} and Corollary~\ref{corollary 2}), these coefficients imply that the single space $S_2(\Gamma(p))$ usually does not admit such a decomposition, and that every non-trivial irreducible character of $\mathrm{PSL}_2(\mathbb{F}_p)$ is a summand of $S_{2,p}$ when $p$ is big enough. Our argument is based on a formula due to Jared Weinstein and a tensor product property of the Steinberg character.

\vspace{2mm}\noindent {\bf Acknowledgement.} We thank an anonymous referee for very helpful suggestions. During this work the author is partially supported by the STU funding NTF17021.

\section{Comparing the spaces}

For convenience we assume that $p\geq 7$. (This avoids the case that $\dim S_2(\Gamma(p))=0$.) 

\begin{thm}\label{main theorem}
Let $S_{2,p}$ be the character of the representation space $S_2(\Gamma(p))+\overline{S_2(\Gamma(p))}$ (here $\overline{S_2(\Gamma(p))}$ is the dual of ${S_2(\Gamma(p))}$). Then
\begin{equation*}
S_{2,p}= \sum_{\theta\in \widehat{T^{F}_s};\ \theta|_Z=1} c_{\theta}R_{T_s}^{\theta}+\sum_{\theta\in \widehat{T^{F}_a};\ \theta|_Z=1} c_{\theta}R_{T_a}^{\theta},
\end{equation*}
where $c_{\theta}=ap+b$ with $\{a,b\} \subseteq \frac{1}{12}\mathbb{Z}$ depending only on the residue of $p$ modulo $12$. 
\end{thm}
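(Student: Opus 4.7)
The plan is to begin from the formula of Weinstein referred to in the introduction, which writes the virtual $G^F$-character $S_{2,p}$ as an explicit combination of standard building blocks attached to the modular curve $X(p)$: induced characters from the Borel subgroup, the Steinberg character, and contributions coming from the cusps and from the elliptic points. The underlying geometric input is the Eichler--Shimura isomorphism, which identifies $S_2(\Gamma(p))+\overline{S_2(\Gamma(p))}$ with the parabolic cohomology $H^1_{\mathrm{par}}(X(p),\mathbb{C})$ as a $G^F$-representation, so that $S_{2,p}$ is computable in terms of permutation characters of $G^F$ once the orbit geometry on $X(p)$ is unwound.

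Next, I would translate each of these building blocks into a combination of Deligne--Lusztig characters $R_{T_s}^\theta$ and $R_{T_a}^\theta$. The decisive tool is the tensor-product property of the Steinberg character together with the classical identity $\mathrm{Ind}_{B^F}^{G^F}\mathbf{1}=\mathbf{1}+\mathrm{St}=R_{T_s}^{1}$. For $\mathrm{SL}_2(\mathbb{F}_p)$ the product $\mathrm{St}\cdot R_T^\theta$ can be expanded as a short signed sum of $R_T^{\theta'}$'s, so iterating these identities collapses Weinstein's expression into a single sum indexed by $\widehat{T_s^F}\sqcup\widehat{T_a^F}$. Since $-I\in Z$ acts trivially on weight-$2$ cusp forms, only characters $\theta$ satisfying $\theta|_Z=1$ can contribute, which matches the index sets stated in the theorem.

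Extracting the coefficients $c_\theta$ and showing that they take the form $ap+b$ with $a,b\in\frac{1}{12}\mathbb{Z}$ depending only on $p\bmod 12$ is where the arithmetic sits. The source of $12$ is structural: the genus formula for $X(p)$ depends on $p\bmod 12$ through the counts of elliptic points of orders $2$ and $3$ and through divisibility properties of $(p\pm 1)/2$, and it is precisely these inputs that enter the Weinstein formula. Organizing the whole calculation by residue class modulo $12$ should therefore produce the stated periodicity automatically, and the dimension identity $\sum_{\theta} c_\theta \dim R_T^\theta = 2\dim S_2(\Gamma(p))=2g(X(p))$ provides a convenient sanity check within each residue class.

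The step I expect to be the main obstacle is the careful signed bookkeeping near the exceptional characters of $\mathrm{SL}_2(\mathbb{F}_p)$: for $\theta$ of order dividing $2$ on $T_s^F$ or $T_a^F$ the virtual character $R_T^\theta$ is reducible and splits into two irreducible halves, and the multiplicities of these halves must be tracked separately as one applies the Steinberg tensor-product identities. Reconciling these contributions on both sides of the identity, and verifying that after the dust settles the resulting coefficients are genuinely linear polynomials in $p$ with half-twelfth-integral slope and intercept, will require a case analysis by $p\bmod 12$; but once this bookkeeping is organized, the structural statement should follow by reading off coefficients in the final expression.
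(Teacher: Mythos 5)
Your overall architecture matches the paper's: start from Weinstein's expression of $S_{2,p}$ as $\overline{\mathbb{Q}}_{\ell}[G^F/Z]$ minus the permutation characters on the cusps and the two kinds of elliptic points, then convert everything into Deligne--Lusztig characters via the Steinberg tensor identity $(-1)^{1+\epsilon(T)}\mathrm{St}\otimes R_{T}^{\theta}=\mathrm{Ind}_{T^F}^{G^F}\theta$, with the mod-$12$ case division coming from which torus ($T_s^F$ of order $p-1$ or $T_a^F$ of order $p+1$) absorbs the order-$4$ and order-$6$ stabilizers of the elliptic points. So the route is the right one.

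However, one step as you describe it would fail: you assert that $\mathrm{St}\cdot R_{T}^{\theta}$ ``can be expanded as a short signed sum of $R_{T}^{\theta'}$'s.'' It cannot. The correct expansion (the paper's Lemma~\ref{lemma 1}, summarized in Remark~\ref{lemma 3}) has $\mathrm{St}\otimes R_{T_1}^{\theta_1}$ hitting essentially \emph{every} $R_{T}^{\theta}$ with $\theta|_Z=\theta_1|_Z$, with coefficient $\pm 1$ plus a $\langle\theta_1,\theta\rangle$ correction --- on the order of $p$ terms in all. This length is not a nuisance but the mechanism of the theorem: each of the roughly $p/2$, $p/4$, or $p/6$ characters trivial on $Z$, $\widetilde{G}_x$, or $\widetilde{G}_y$ contributes $\pm 1$ to every coefficient $c_{\theta}$, and it is exactly this accumulation ($\tfrac{1}{2}-\tfrac{1}{4}-\tfrac{1}{6}=\tfrac{1}{12}$) that makes $c_{\theta}$ linear in $p$ with denominator dividing $12$. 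If the expansion were genuinely short, the $c_{\theta}$ would remain bounded as $p\to\infty$ and the stated conclusion could not come out. Separately, your anticipated obstacle about the splitting of $R_{T}^{\alpha}$ into two irreducible halves is not an obstacle for this theorem: the statement only asks for coefficients of the virtual characters $R_{T}^{\theta}$ themselves, so no splitting into irreducibles is needed (that refinement only enters in the corollaries concerning $S_2(\Gamma(p))$ alone).
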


We shall start with a Mackey formula for a tensor product with the Steinberg character. (Indeed, the study concerning tensoring with the Steinberg character has a long history; see \cite{Hiss-Zalesski_Weil--Steinberg_repn_2009} and \cite{Hiss-Zalesski_pams_tensorsteinberg_2010} for more recent works.) Our original approach is an explicit computation, which is now replaced by the following general lemma due to an anonymous referee:

\begin{lemm}\label{lemma 1}
Let $\mathbf{G}$ be a connected reductive group over a finite field $\mathbb{F}_q$ with $\mathrm{char}(\mathbb{F}_q)=q_1$, whose derived subgroup $\mathbf{G}_{\mathrm{der}}$ is of type $\mathsf{A}_1$. Let $F$ be the geometric Frobenius on $\mathbf{G}$. If $\mathbf{T},\mathbf{T}'$ are two $F$-stable maximal tori of $\mathbf{G}$ and if $\theta'\in\widehat{\mathbf{T'}^F}$, then
\begin{equation*}
(-1)^{\epsilon(\mathbf{G})+\epsilon(\mathbf{T})}\cdot{^*R_{\mathbf{T}}}(\mathrm{St}\cdot R_{\mathbf{T}'}^{\theta'})=\sum_{ \substack{ w\in \mathbf{T}^F\backslash\mathbf{G}^F/\mathbf{T}'^F \\ \mathbf{T}={^w\mathbf{T}'} } } {^w\theta'}+|\mathbf{Z}^F/{\mathbf{Z}^{\circ}}^F|\cdot (-1)^{\epsilon(\mathbf{G})+\epsilon(\mathbf{T}')}\cdot \sum_{ \substack{ \theta\in \widehat{\mathbf{T}^F} \\ \theta|_{\mathbf{Z}^F}=\theta'|_{\mathbf{Z}^F} } } \theta,
\end{equation*}
where ${^*R_{\mathbf{T}}}$ denotes the Deligne--Lusztig restriction functor, $\mathrm{St}$ denotes the Steinberg character, $\epsilon(-)$ denotes the $\mathbb{F}_q$-rank, and  $\mathbf{Z}$ denotes the centre of $\mathbf{G}$.
\end{lemm}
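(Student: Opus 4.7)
My plan is to compute $^{*}R_{\mathbf{T}}(\mathrm{St}\cdot R_{\mathbf{T}'}^{\theta'})$ by testing it against each irreducible character $\theta\in\widehat{\mathbf{T}^F}$ via Frobenius reciprocity and then collapsing the right-hand side using the special structure of type $\mathsf{A}_1$. Concretely, Deligne--Lusztig adjunction gives
\begin{equation*}
\langle {}^{*}R_{\mathbf{T}}(\mathrm{St}\cdot R_{\mathbf{T}'}^{\theta'}),\theta\rangle_{\mathbf{T}^F}=\langle \mathrm{St}\cdot R_{\mathbf{T}'}^{\theta'},R_{\mathbf{T}}^{\theta}\rangle_{\mathbf{G}^F},
\end{equation*}
and since $\mathrm{St}$ is self-dual this is also $\langle R_{\mathbf{T}'}^{\theta'},\mathrm{St}\cdot R_{\mathbf{T}}^{\theta}\rangle_{\mathbf{G}^F}$. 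Thus it suffices to understand, for each $F$-stable maximal torus and each character, the product $\mathrm{St}\cdot R_{\mathbf{T}}^{\theta}$.

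The next step is to use the $\mathsf{A}_1$ identity relating the Steinberg character to DL characters: for any $F$-stable maximal torus $\mathbf{T}$ of $\mathbf{G}$, one has $R_{\mathbf{T}}^{1}=1_{\mathbf{G}}+(-1)^{\epsilon(\mathbf{G})+\epsilon(\mathbf{T})}\mathrm{St}$ modulo a contribution from $\mathbf{Z}^F/\mathbf{Z}^{\circ F}$, which one verifies by evaluating both sides on regular semisimple and on central elements and using the Steinberg character formula $\mathrm{St}(g)=\epsilon_{\mathbf{G}}\epsilon_{C_{\mathbf{G}}(g)^{\circ}}|C_{\mathbf{G}}(g)^{\circ F}|_{q_1}$. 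Rearranging gives
\begin{equation*}
(-1)^{\epsilon(\mathbf{G})+\epsilon(\mathbf{T})}\mathrm{St}\cdot R_{\mathbf{T}'}^{\theta'}=R_{\mathbf{T}}^{1}\cdot R_{\mathbf{T}'}^{\theta'}- R_{\mathbf{T}'}^{\theta'} + (\text{center correction}),
\end{equation*}
reducing the problem to evaluating $^{*}R_{\mathbf{T}}$ of the three terms on the right.

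For the first two terms I would apply the standard Mackey formula for Deligne--Lusztig induction,
\begin{equation*}
\langle R_{\mathbf{T}}^{\theta},R_{\mathbf{T}'}^{\theta'}\rangle_{\mathbf{G}^F}=\#\{w\in \mathbf{T}^F\backslash \mathbf{G}^F/\mathbf{T}'^F : \mathbf{T}={}^{w}\mathbf{T}',\ {}^{w}\theta'=\theta\},
\end{equation*}
together with a projection-type computation of $R_{\mathbf{T}}^{1}\cdot R_{\mathbf{T}'}^{\theta'}$ which, in $\mathsf{A}_1$, produces exactly the double-coset sum $\sum_{w:\,\mathbf{T}={}^{w}\mathbf{T}'}{}^{w}\theta'$ after applying $^{*}R_{\mathbf{T}}$. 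The central-correction term, on the other hand, contributes (after $^{*}R_{\mathbf{T}}$) the sum $\sum_{\theta|_{\mathbf{Z}^F}=\theta'|_{\mathbf{Z}^F}}\theta$, with the multiplicity $|\mathbf{Z}^F/\mathbf{Z}^{\circ F}|$ arising because characters of $\mathbf{T}^F$ sharing the same restriction to $\mathbf{Z}^F$ get collapsed when one inflates through $\mathbf{T}^F/\mathbf{Z}^F$. The signs $(-1)^{\epsilon(\mathbf{G})+\epsilon(\mathbf{T})}$ and $(-1)^{\epsilon(\mathbf{G})+\epsilon(\mathbf{T}')}$ are then bookkept using the identities $D_{\mathbf{G}}(R_{\mathbf{T}}^{\theta})=(-1)^{\epsilon(\mathbf{G})+\epsilon(\mathbf{T})}R_{\mathbf{T}}^{\theta}$ and the Steinberg-expansion of step two.

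The main obstacle I anticipate is the disconnected-center case, which is the one actually needed in the application (for $\mathbf{G}=\mathrm{SL}_2$ one has $\mathbf{Z}^{\circ}=1$, so $|\mathbf{Z}^F/\mathbf{Z}^{\circ F}|=2$). In that setting the two tori $\mathbf{T}$ and $\mathbf{T}'$ both contain the full, disconnected $\mathbf{Z}^F$, and pairs of distinct characters of $\mathbf{T}^F$ with the same central restriction fail to be separated by the Mackey count. Matching the resulting double-counting with the factor $|\mathbf{Z}^F/\mathbf{Z}^{\circ F}|$ on the right-hand side, while simultaneously tracking the signs coming from both ranks $\epsilon(\mathbf{T})$ and $\epsilon(\mathbf{T}')$, is where the computation must be done most carefully.
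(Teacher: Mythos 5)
There is a genuine gap at the heart of your proposal. Your plan is to substitute $(-1)^{\epsilon(\mathbf{G})+\epsilon(\mathbf{T})}\mathrm{St}=R_{\mathbf{T}}^{1}-1_{\mathbf{G}}$ (which, incidentally, is an \emph{exact} identity when $\mathbf{G}_{\mathrm{der}}$ is of type $\mathsf{A}_1$ --- the only unipotent characters are $1$ and $\mathrm{St}$, so there is no ``center correction'' to this identity; that term of your decomposition is spurious) and then to evaluate ${}^{*}R_{\mathbf{T}}$ of the pointwise product $R_{\mathbf{T}}^{1}\cdot R_{\mathbf{T}'}^{\theta'}$. But the Mackey formula computes the composition ${}^{*}R_{\mathbf{T}}\circ R_{\mathbf{T}'}$, i.e.\ inner products $\langle R_{\mathbf{T}}^{\theta},R_{\mathbf{T}'}^{\theta'}\rangle$; it says nothing about the \emph{tensor} product of two Deligne--Lusztig characters as a class function. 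Your substitution is in fact circular: by construction $R_{\mathbf{T}}^{1}\cdot R_{\mathbf{T}'}^{\theta'}=(-1)^{\epsilon(\mathbf{G})+\epsilon(\mathbf{T})}\mathrm{St}\cdot R_{\mathbf{T}'}^{\theta'}+R_{\mathbf{T}'}^{\theta'}$, so the ``projection-type computation'' you invoke for this product is exactly the statement you are trying to prove, and no method is given for it. As a result the central term $|\mathbf{Z}^F/{\mathbf{Z}^{\circ}}^F|\cdot(-1)^{\epsilon(\mathbf{G})+\epsilon(\mathbf{T}')}\sum_{\theta}\theta$ is never actually derived --- you attribute it to a correction that does not exist, rather than locating its true source.

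That source is the non-semisimple elements, and this is what the paper's proof isolates. The trick there is to introduce $\tilde{R}(g)=R_{\mathbf{T}'}^{\theta'}(g_{ss})-R_{\mathbf{T}'}^{\theta'}(g)$, which is supported on non-semisimple elements, so that $\mathrm{St}\cdot\tilde{R}=0$ and $\mathrm{St}\cdot R_{\mathbf{T}'}^{\theta'}=\mathrm{St}\cdot(R_{\mathbf{T}'}^{\theta'}+\tilde{R})$; by \cite[12.18, 12.7]{DM1991} the left-hand side of the lemma then collapses to ${}^{*}R_{\mathbf{T}}(R_{\mathbf{T}'}^{\theta'}+\tilde{R})$, with the Mackey formula handling $R_{\mathbf{T}'}^{\theta'}$ and an explicit computation handling $\tilde{R}$: since $\mathbf{G}_{\mathrm{der}}$ is of type $\mathsf{A}_1$, every non-semisimple element is $zu$ with $z\in\mathbf{Z}^F$ and $u\neq 1$ regular unipotent, so $Q_{\mathbf{T}}(u)=1$ and the sum over $z\in\mathbf{Z}^F$ produces precisely $\langle\theta|_{\mathbf{Z}^F},\theta'|_{\mathbf{Z}^F}\rangle$ with the coefficient $|\mathbf{Z}^F/{\mathbf{Z}^{\circ}}^F|\cdot(-1)^{\epsilon(\mathbf{G})+\epsilon(\mathbf{T}')}$ coming from the dimension formula for $Q_{\mathbf{T}'}(1)$. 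Your opening adjunction step and your discussion of the disconnected-centre subtlety are sensible, but without an actual mechanism for decomposing $\mathrm{St}\cdot R_{\mathbf{T}'}^{\theta'}$ (equivalently $R_{\mathbf{T}}^{1}\cdot R_{\mathbf{T}'}^{\theta'}$) the proof does not go through.
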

\begin{proof}
Consider the class function $\tilde{R}\colon \mathbf{G}^F\rightarrow\overline{\mathbb{Q}}_{\ell}$ given by $\tilde{R}(g)=R_{\mathbf{T}'}^{\theta'}(g_{ss})-R_{\mathbf{T}'}^{\theta'}(g)$, where $g_{ss}$ is the semisimple part of $g$. Note that $\mathrm{St}\cdot \tilde{R}=0$, so 
$$(-1)^{\epsilon(\mathbf{G})+\epsilon(\mathbf{T})}\cdot{^*R_{\mathbf{T}}}(\mathrm{St}\cdot R_{\mathbf{T}'}^{\theta'})=(-1)^{\epsilon(\mathbf{G})+\epsilon(\mathbf{T})}\cdot{^*R_{\mathbf{T}}}(\mathrm{St}\cdot (R_{\mathbf{T}'}^{\theta'}+\tilde{R})).$$
By \cite[12.18]{DM1991} and \cite[12.7]{DM1991} we can rewrite the above as
\begin{equation*}
(-1)^{\epsilon(\mathbf{G})+\epsilon(\mathbf{T})}\cdot{^*R_{\mathbf{T}}}(\mathrm{St}\cdot (R_{\mathbf{T}'}^{\theta'}+\tilde{R}))={\mathrm{Res}^{\mathbf{G}^F}_{\mathbf{T}^F}}(R_{\mathbf{T}'}^{\theta'}+\tilde{R})={^*R_{\mathbf{T}}}(R_{\mathbf{T}'}^{\theta'}+\tilde{R}).
\end{equation*}
Note that by the Mackey formula (see \cite[11.13]{DM1991}) we have
$${^*R_{\mathbf{T}}}R_{\mathbf{T}'}^{\theta'}=\sum_{ \substack{ w\in \mathbf{T}^F\backslash\mathbf{G}^F/\mathbf{T}'^F \\ \mathbf{T}={^w\mathbf{T}'} } } {^w\theta'}.$$
So now it remains to deal with ${^*R_{\mathbf{T}}}\tilde{R}$. It suffices to compute $\langle R_{\mathbf{T}}^{\theta}, \tilde{R} \rangle_{\mathbf{G}^F}$ for a general $\theta\in\widehat{\mathbf{T}^F}$. Note that, since $\tilde{R}$ is zero for semisimple elements and since $\mathbf{G}_{\mathrm{der}}$ is of type $\mathsf{A}_1$, we have
\begin{equation}\label{formula:1}
\langle R_{\mathbf{T}}^{\theta}, \tilde{R} \rangle_{\mathbf{G}^F}=\frac{1}{|\mathbf{G}^F|}\sum_{ \substack{ 1\neq u\in \mathcal{U}^F} }\sum_{ z\in \mathbf{Z}^F} R_{\mathbf{T}}^{\theta} (zu) \tilde{R}(z^{-1}u^{-1}),
\end{equation}
where $\mathcal{U}$ denotes the set of unipotent elements of $\mathbf{G}$. By the Green function character formula (see \cite[4.2]{DL1976}) we have $R_{\mathbf{T}}^{\theta} (zu)=\theta(z)Q_{\mathbf{T}}(u)$, where $Q_{\mathbf{T}}$ denotes the Green function (the restriction of $R_{\mathbf{T}}^{1}$ to $\mathcal{U}^F$). Meanwhile, as $\mathbf{G}_{\mathrm{der}}$ is of type $\mathsf{A}_1$, we have $|\mathcal{U}^F|=q^2=|\mathbf{G}_{\mathrm{der}}^F|_{q_1'}+1$, and every $1\neq u\in\mathcal{U}^F$ is regular (so $Q_{\mathbf{T}}(u)=1$ by \cite[9.16]{DL1976}). Thus
$$\eqref{formula:1}=\frac{1}{|\mathbf{G}^F|}\sum_{  1\neq u\in \mathcal{U}^F   }\sum_{ z\in \mathbf{Z}^F} \theta(z)\theta'(z^{-1}) \cdot (Q_{\mathbf{T}'}(1)-1)=\frac{|\mathbf{G}_{\mathrm{der}}^F|_{q_1'}}{|\mathbf{G}^F|}(Q_{\mathbf{T}'}(1)-1)\sum_{ z\in \mathbf{Z}^F} \theta(z)\theta'(z^{-1}).$$
By the dimension formula (see \cite[12.9]{DM1991}) $Q_{\mathbf{T}'}(1)=(-1)^{\epsilon(\mathbf{G})+\epsilon(\mathbf{T})}{|\mathbf{G}^F|_{q_1'}}/{|{\mathbf{T}'}^F|}$, and then by checking each of the two cases that $\mathbf{T}'$ splits or not, we see this value always equals to $(-1)^{\epsilon(\mathbf{G})+\epsilon(\mathbf{T})}{|\mathbf{G}_{\mathrm{der}}^F|_{q_1}}+1$. Together with the fact that $|\mathbf{G}^F|=|{\mathbf{Z}^{\circ}}^F|\cdot|\mathbf{G}_{\mathrm{der}}^F|$, this implies  $\frac{|\mathbf{G}_{\mathrm{der}}^F|_{q_1'}}{|\mathbf{G}^F|}(Q_{\mathbf{T}'}(1)-1)=(-1)^{\epsilon(\mathbf{G})+\epsilon(\mathbf{T}')}/|{\mathbf{Z}^{\circ}}^F|$. Therefore
$$\langle R_{\mathbf{T}}^{\theta}, \tilde{R} \rangle_{\mathbf{G}^F}=|\mathbf{Z}^F/{\mathbf{Z}^{\circ}}^F| (-1)^{\epsilon(\mathbf{G})+\epsilon(\mathbf{T}')} \langle \theta|_{\mathbf{Z}^F}, \theta'|_{\mathbf{Z}^F}\rangle_{\mathbf{Z}^F},$$
as desired.
\end{proof}

From now on we let $\mathbf{G}$ be $G=\mathrm{SL}_2$ over $\mathbb{F}_p$. (So $\mathbf{Z}=Z$.) Let $T$ be an $F$-stable maximal torus of $G$ and $\theta\in\widehat{T^F}$; we always assume $T$ is $T_a$ or $T_s$ (a fixed anisotropic torus and a fixed split torus). We denote the unique character of $T^F$ of order $2$ by $\alpha$. Note that $\alpha$ is the ``quadratic residue symbol'', i.e.\ $\alpha(t)=1$ if and only if $t$ is a square in $T^F$; in particular:  If $T=T_s$, then $\alpha|_Z=1$ if and only if $p=1\mod 4$; if $T=T_a$, then $\alpha|_Z=1$ if and only if $p=3\mod 4$.

\begin{remark}\label{lemma 3}
Here we make a summary of the situations when specialising $\mathbf{G}$ to $G=\mathrm{SL}_2$ in Lemma~\ref{lemma 1}. Let $T_1$ be an $F$-stable maximal torus of $G$, and $\theta_1\in\widehat{T_1^F}$ with $\theta_1|_Z=1$. The virtual character $\mathrm{St}\otimes R_{T_1}^{\theta_1}$ is a $\mathbb{Z}$-linear combination of Deligne--Lusztig characters, and the coefficient of $R_{T}^{\theta}$ in the combination for each $\theta$ can be arranged to be: (Note that we do not identify $R_{T}^{\theta}$ with $R_{T}^{\theta^{-1}}$ unless $\theta=\alpha$.)
\begin{itemize}
\item[(a)] The coefficient for $R_{T}^{\theta}$ with $\theta|_Z\neq 1$ is zero;

\item[(b)] let $\theta_s\neq 1\in \widehat{T_s^F}$ be such that $\theta_s|_Z=1$. If $T_1=T_s$, then the coefficient for $R_{T_s}^{\theta_s}$ is $1+\langle \theta_1,\theta_s \rangle_{T_s^F}$;

\item[(c)] let $\theta_s\neq 1\in \widehat{T_s^F}$ be such that $\theta_s|_Z=1$. If $T_1=T_a$, then the coefficient for $R_{T_s}^{\theta_s}$ is $-1$;

\item[(d)] let $\theta_a\neq 1\in \widehat{T_a^F}$ be such that $\theta_a|_Z=1$. If $T_1=T_a$, then the coefficient for $R_{T_a}^{\theta_a}$ is $1-\langle \theta_1,\theta_a \rangle_{T_a^F}$;

\item[(e)] let $\theta_a\neq 1\in \widehat{T_a^F}$ be such that $\theta_a|_Z=1$. If $T_1=T_s$, then the coefficient for $R_{T_a}^{\theta_a}$ is $-1$;

\item[(f)] if $T_1=T_s$, then the coefficient of $R_{T_s}^1$ is $1+\langle \theta_1,1 \rangle_{T_s}$ and the coefficient of $R_{T_a}^1$ is $-1$;

\item[(g)] if $T_1=T_a$, then the coefficient of $R_{T_s}^1$ is $-1$ and the coefficient of $R_{T_a}^1$ is $1-\langle \theta_1,1 \rangle_{T_a}$.
\end{itemize}
\end{remark}

Let $G_{x}\subseteq G^F/Z$ be the subgroup (of order $2$) generated by 
$\begin{bmatrix}
0 & 1\\
-1 & 0\\
\end{bmatrix}Z$, 
$G_{y}\subseteq G^F/Z$ the subgroup (of order $3$) generated by 
$\begin{bmatrix}
0 & 1\\
-1 & -1\\
\end{bmatrix}Z$, 
and $G_{z}\subseteq G^F/Z$ the subgroup (of order $p$) generated by 
$\begin{bmatrix}
1 & 1\\
0 & 1\\
\end{bmatrix}Z$,
then Weinstein's argument in \cite[Page~31]{Weinstein_2007_thesis} implies that $S_{2,p}$ is a sum of the permutation characters:
\begin{equation}\label{formula 2}
S_{2,p}= \overline{\mathbb{Q}}_{\ell}[G^F/Z]-\mathrm{Ind}^{G^F/Z}_{G_{x}}1_{G_{x}}-\mathrm{Ind}^{G^F/Z}_{G_y}1_{G_y}-\mathrm{Ind}^{G^F/Z}_{G_{z}}1_{G_{z}}+2\cdot 1_{G^F/Z}.
\end{equation}
(See also \cite[4.3]{Weinstein_2009_IMRN_HilModForm}, in which the formula is established in the framework of parabolic cohomology).

\vspace{2mm} In order to put the space of cusp forms into the picture of representation theory of a finite reductive group, we need to decompose the above characters of large degree; for this purpose we shall use the following nice formula of tensor product by Steinberg character: (See also \cite[2.1]{Hiss-Zalesski_pams_tensorsteinberg_2010} for a relevant formula with general class functions)

\begin{lemm}\label{lemma 4}
We have $(-1)^{1+\epsilon(T)}\mathrm{St}\otimes R_{T}^{\theta}=\mathrm{Ind}_{T^F}^{G^F}\theta$.
\end{lemm}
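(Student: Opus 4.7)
The plan is to verify the identity class-by-class on $G^F=\mathrm{SL}_2(\mathbb{F}_p)$. Both sides are class functions, so I will show they vanish on non-semisimple classes and agree on semisimple ones.

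For the right-hand side, $\mathrm{Ind}_{T^F}^{G^F}\theta(g)\neq 0$ forces $g$ to be $G^F$-conjugate into $T^F$ and hence semisimple, so $\mathrm{Ind}_{T^F}^{G^F}\theta$ automatically vanishes on non-semisimple classes. For the left-hand side, I would use $\mathrm{St}=\mathrm{Ind}_{B^F}^{G^F}1-1$: a non-semisimple element of $\mathrm{SL}_2(\mathbb{F}_p)$ is of the form $\pm u$ with $u$ a non-trivial unipotent, and any such element lies in exactly one Borel subgroup of $G^F$, so $\mathrm{St}(\pm u)=1-1=0$, whence $\mathrm{St}\otimes R_T^\theta$ vanishes there too.

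On a semisimple element $s\in G^F$, I would combine three ingredients: the Deligne-Lusztig character formula for $R_T^\theta(s)$ (see \cite[4.2]{DL1976}), the Green function dimension formula $Q_{T'}^{C^\circ(s)}(1)=(-1)^{\epsilon(C^\circ(s))+\epsilon(T')}|C^\circ(s)^F|_{p'}/|T'^F|$ (see \cite[12.9]{DM1991}), and the classical evaluation $\mathrm{St}(s)=(-1)^{\epsilon(G)+\epsilon(C^\circ(s))}|C^\circ(s)^F|_p$ of the Steinberg character on a semisimple element. Multiplying these and using $|C^\circ(s)^F|=|C^\circ(s)^F|_p\cdot|C^\circ(s)^F|_{p'}$ to cancel the denominator, one arrives at
$$\mathrm{St}(s)\cdot R_T^\theta(s)=(-1)^{\epsilon(G)+\epsilon(T)}\cdot\frac{1}{|T^F|}\sum_{x\in G^F:\ x^{-1}sx\in T^F}\theta(x^{-1}sx)=(-1)^{\epsilon(G)+\epsilon(T)}\mathrm{Ind}_{T^F}^{G^F}\theta(s).$$
Since $\epsilon(G)=1$ for $G=\mathrm{SL}_2$, this sign equals $(-1)^{1+\epsilon(T)}$, matching the statement.

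The main obstacle is sign and order bookkeeping: correctly separating the $p$-part from the $p'$-part of $|C^\circ(s)^F|$, and tracking the various $(-1)^{\epsilon(\cdot)}$ factors across the Deligne-Lusztig, Green function, and Steinberg formulas. The underlying conceptual point, visible already in the proof of Lemma~\ref{lemma 1} through the vanishing $\mathrm{St}\cdot\tilde{R}=0$, is that multiplication by $\mathrm{St}$ converts Deligne-Lusztig induction into ordinary induction up to the sign $(-1)^{\epsilon(G)+\epsilon(T)}$.
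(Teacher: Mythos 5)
Your proposal is correct, but it takes a different (more self-contained) route than the paper: the paper's entire proof of Lemma~\ref{lemma 4} is the citation ``See \cite[7.3]{DL1976}'', i.e.\ it invokes Deligne--Lusztig's general theorem $\mathrm{St}\cdot R_{T}^{\theta}=(-1)^{\epsilon(G)+\epsilon(T)}\mathrm{Ind}_{T^F}^{G^F}\theta$ directly, whereas you essentially reconstruct the standard proof of that theorem, specialised to $\mathrm{SL}_2$. Your two reductions are sound: the induced character on the right is supported on semisimple classes, and on the left $\mathrm{St}(\pm u)=1-1=0$ because a nontrivial unipotent fixes exactly one point of $\mathbb{P}^1$; and on a semisimple $s$ the product of Curtis's value $\mathrm{St}(s)=(-1)^{\epsilon(G)+\epsilon(C^{\circ}(s))}|C^{\circ}(s)^F|_{p}$ with the character formula \cite[4.2]{DL1976} and the dimension formula \cite[12.9]{DM1991} does collapse to $(-1)^{\epsilon(G)+\epsilon(T)}\,|T^F|^{-1}\sum_{x}\theta(x^{-1}sx)$, with the $p$-part and $p'$-part of $|C^{\circ}(s)^F|$ cancelling exactly as you say. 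The one ingredient you use without proof --- the value of $\mathrm{St}$ on semisimple elements --- is itself a nontrivial classical fact (it is not needed if one just cites \cite[7.3]{DL1976}), though for $\mathrm{SL}_2$ it can be checked by hand from $\mathrm{St}=\mathrm{Ind}_{B^F}^{G^F}1-1$ by counting Borel subgroups containing $s$. What your version buys is transparency: it makes explicit the mechanism (tensoring with $\mathrm{St}$ kills the non-semisimple support and its semisimple values cancel the Green-function normalisation) that also underlies the referee's Lemma~\ref{lemma 1} via the identity $\mathrm{St}\cdot\tilde{R}=0$, exactly as you observe; what the paper's citation buys is brevity and immediate validity for arbitrary reductive $G$.
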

\begin{proof}
See \cite[7.3]{DL1976}.
\end{proof}

Now let $\widetilde{G}_*$ be the preimage of $G_*$ along the surjection $G^F\rightarrow G^F/Z$ for each $*\in\{ x, y, z \}$, then \eqref{formula 2} becomes
\begin{equation}\label{formula 3}
S_{2,p}= \mathrm{Ind}^{G^F}_Z1_Z-\mathrm{Ind}^{G^F}_{\widetilde{G}_{x}}1_{\widetilde{G}_{x}}-\mathrm{Ind}^{G^F}_{\widetilde{G}_y}1_{\widetilde{G}_y}-\mathrm{Ind}^{G^F}_{\widetilde{G}_{z}}1_{\widetilde{G}_{z}}+2\cdot 1_{G^F}.
\end{equation}
Here a basic observation is that the generators of $G_{x}$ and $G_{y}$ are semisimple, so we can conjugate $\widetilde{G}_{x}$ and $\widetilde{G}_{y}$ into $T_s^F\cong\mathbb{F}_p^{\times}$ or $T_a^F\cong\mu_{p+1}$, which depends on $p\mod 12$:
\begin{lemm}\label{lemma 5}
We have (up to conjugation in $G^F$):
\begin{itemize}
\item If $p=1 \mod 12$, then both $\widetilde{G}_{x}$ and $\widetilde{G}_{y}$ are in $T_s^F$; \item if $p=5 \mod 12$, then $\widetilde{G}_{x}$ is in $T_s^F$ and $\widetilde{G}_{y}$ is in $T_a^F$; 
\item if $p=7 \mod 12$, then $\widetilde{G}_{x}$ is in $T_a^F$ and $\widetilde{G}_{y}$ is in $T_s^F$; 
\item if $p=11 \mod 12$, then both $\widetilde{G}_{x}$ and $\widetilde{G}_{y}$ are in $T_a^F$.
\end{itemize}
\end{lemm}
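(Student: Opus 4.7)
The plan is to reduce everything to a concrete computation with the two matrices. Set
\[
A_x := \begin{bmatrix} 0 & 1 \\ -1 & 0 \end{bmatrix}, \qquad A_y := \begin{bmatrix} 0 & 1 \\ -1 & -1 \end{bmatrix}.
\]
Their characteristic polynomials are $t^2+1$ and $t^2+t+1$ respectively. In particular, $A_x$ has order $4$ in $G^F$ (with $A_x^2=-I$) and $A_y$ has order $3$ in $G^F$ (by Cayley--Hamilton, $A_y^2 = -A_y-I$, hence $A_y^3=I$). Consequently $\widetilde{G}_x = \langle A_x\rangle$ is cyclic of order $4$ and $\widetilde{G}_y = \langle -A_y\rangle$ is cyclic of order $6$; each is generated by a single semisimple element.

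Next I would observe that, because $A_x$ and $A_y$ are regular semisimple (they have distinct eigenvalues in $\overline{\mathbb{F}}_p$), each of them lies in a unique maximal torus of $G$, namely its own centralizer, and this torus is $F$-stable (since the elements are $F$-fixed). Whether this torus is $G^F$-conjugate to $T_s$ or to $T_a$ is then governed by whether the eigenvalues of the generator lie in $\mathbb{F}_p$ or only in $\mathbb{F}_{p^2}$. For $A_x$ the eigenvalues are the primitive fourth roots of unity, which lie in $\mathbb{F}_p$ if and only if $-1$ is a square modulo $p$, i.e.\ $p\equiv 1\ (\mathrm{mod}\ 4)$. For $A_y$ the eigenvalues are primitive third roots of unity, which lie in $\mathbb{F}_p$ if and only if $p\equiv 1\ (\mathrm{mod}\ 3)$. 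In the split case the element is diagonalisable over $\mathbb{F}_p$, hence $G^F$-conjugate into $T_s^F$; in the non-split case its eigenvalues are Galois-conjugate over $\mathbb{F}_p$, and a standard rational canonical form argument places it inside (a $G^F$-conjugate of) $T_a^F$. Because the generator itself lies in the abelian torus, so does the whole cyclic group it generates (together with $Z$, which is contained in every maximal torus).

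Finally I would assemble the four cases via the Chinese Remainder Theorem modulo $12$: the residue of $p$ modulo $12$ is determined by the pair $(p\bmod 4, p\bmod 3)$, and the four possibilities $(1,1),(1,2),(3,1),(3,2)$ correspond precisely to $p\equiv 1,5,7,11\ (\mathrm{mod}\ 12)$, yielding the stated four conclusions.

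I don't expect any serious obstacle here: the content is really just the computation of two characteristic polynomials together with the quadratic and cubic reciprocity statements ``$-1$ is a square mod $p$ iff $p\equiv 1\ (\mathrm{mod}\ 4)$'' and ``$\mathbb{F}_p$ contains a primitive cube root of unity iff $p\equiv 1\ (\mathrm{mod}\ 3)$''. The only mild care needed is to verify that containing the generator really suffices to place the entire preimage $\widetilde{G}_*$ inside the torus, which follows from the cyclicity noted above.
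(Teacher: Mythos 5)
Your argument is correct and is exactly the ``direct computation'' that the paper's one-line proof alludes to: you identify $\widetilde{G}_x=\langle A_x\rangle$ and $\widetilde{G}_y=\langle -A_y\rangle$ as cyclic groups generated by regular semisimple elements, and decide split versus anisotropic by whether the eigenvalues (primitive fourth, resp.\ third, roots of unity) lie in $\mathbb{F}_p$, which reduces to $p\bmod 4$ and $p\bmod 3$ and hence to $p\bmod 12$. No gaps; the only point worth making explicit is the standard fact that $\mathrm{SL}_2$ has exactly two $G^F$-classes of $F$-stable maximal tori, so that ``split'' and ``non-split'' really mean $G^F$-conjugate to $T_s$ and $T_a$ respectively, which you have implicitly used and which is routine.
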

\begin{proof}
This follows from direct computations.
\end{proof}

For $*\in\{ x,y\}$, let $T_{*}$ be one of $T_{s}$ and $T_{a}$, and suppose $\widetilde{G}_*$ lies in $T_{*}$. Then \eqref{formula 3} becomes (note that $B^F/\widetilde{G}_{z}=T_s^F/Z$)
\begin{equation}\label{formula 4}
\begin{split}
S_{2,p}=
& \mathrm{Ind}^{G^F}_Z1_Z-\mathrm{Ind}^{G^F}_{\widetilde{G}_{z}}1_{\widetilde{G}_{z}}+2\cdot 1_{G^F}-\sum_{\theta\in \widehat{T_{x}^F};\ \theta|_{\widetilde{G}_{x}}=1 } \mathrm{Ind}^{G^F}_{T^F_{x}}\theta-\sum_{\theta\in \widehat{T_{y}^F};\ \theta|_{\widetilde{G}_{y}}=1 } \mathrm{Ind}^{G^F}_{T^F_{y}}\theta\\
= & \sum_{ \theta\in\widehat{T^F_s};\ \theta|_Z=1 } \mathrm{St}\otimes R_{T_s}^{\theta}-\sum_{ \theta\in\widehat{T^F_s};\ \theta|_Z=1 } R_{T_s}^{\theta}+2\cdot 1_{G^F}\\
&-(-1)^{\epsilon(T_{x})+1}\sum_{\theta\in \widehat{T_{x}^F};\ \theta|_{\widetilde{G}_{x}}=1 } \mathrm{St}\otimes R_{T_{x}}^{\theta}-(-1)^{\epsilon(T_y)+1}\sum_{\theta\in \widehat{T_{y}^F};\ \theta|_{\widetilde{G}_{y}}=1 } \mathrm{St}\otimes R_{T_y}^{\theta},
\end{split}
\end{equation}
where the second equality follows from Lemma~\ref{lemma 4}.

\begin{proof}[Proof of Theorem~\ref{main theorem}]
We can write out these $c_{\theta}$ explicitly. Consider the following (possibly empty) subsets of $ \widehat{T^F_*}$ for each $*\in\{ s,a \}$: First, let $A_*$ be the set consisting of those $\theta$ such that $\theta$ is defined and non-trivial on both $\widetilde{G}_{x}$ and $\widetilde{G}_y$, then let $B_*$ be the set consisting of those $\theta\neq 1$ such that $\theta$ is defined and trivial on both $\widetilde{G}_{x}$ and $\widetilde{G}_y$; let $C_*\subseteq \widehat{T^F_*}\setminus (A_*\cup B_*)$ be the set consisting of those $\theta\neq 1$ such that $\theta$ is defined and trivial on ${\widetilde{G}_{x}}$; let $D_*\subseteq \widehat{T^F_*}\setminus (A_*\cup B_* \cup C_*)$ be the set consisting of those $\theta\neq 1$ such that $\theta$ is defined and trivial on ${\widetilde{G}_{y}}$; let $E_*=\{ 1 \}$.

\vspace{2mm} Now by applying Lemma~\ref{lemma 1} (Remark~\ref{lemma 3}) and Lemma~\ref{lemma 5} to \eqref{formula 4} we can decompose $S_{2,p}$ into a sum of $R_{T_*}^{\theta}$ with $\theta|_{Z^F}=1$, where the coefficients $c_{\theta}$ are given as:

\begin{center}
\begin{tabular}{ |c|c|c|c|c| } 
\hline
\diagbox{$\theta$}{$p$} & $1\mod 12$ & $5 \mod12$ & $7\mod 12$ & $11\mod 12$ \\ 
\hline
$A_s{^{^{^{^{}}}}_{_{_{}}}}$ & $\frac{p-1}{12}+1$ & $\frac{p-5}{12}$ & $\frac{p-7}{12}$ & $\frac{p-11}{12}$ \\ 
\hline
$B_s{^{^{^{^{}}}}_{_{_{}}}}$ & $\frac{p-1}{12}-2$ & $\frac{p-5}{12}$ & $\frac{p-7}{12}$ & $\frac{p-11}{12}$ \\
\hline
$C_s{^{^{^{^{}}}}_{_{_{}}}}$ & $\frac{p-1}{12}-1$ & $\frac{p-5}{12}-1$ & $\frac{p-7}{12}$ & $\frac{p-11}{12}$ \\ 
\hline
$D_s{^{^{^{^{}}}}_{_{_{}}}}$ & $\frac{p-1}{12}-1$ & $\frac{p-5}{12}$ & $\frac{p-7}{12}-1$ & $\frac{p-11}{12}$ \\ 
\hline
$E_s{^{^{^{^{}}}}_{_{_{}}}}$ & $\frac{p-1}{12}-1$ & $\frac{p-5}{12}$ & $\frac{p-7}{12}$ & $\frac{p-11}{12}+1$ \\ 
\hline
$A_a{^{^{^{^{}}}}_{_{_{}}}}$ & $-\frac{p-1}{12}$ & $-\frac{p-5}{12}+1$ & $-\frac{p-7}{12}$ & $-\frac{p-11}{12}$ \\ 
\hline
$B_a{^{^{^{^{}}}}_{_{_{}}}}$ & $-\frac{p-1}{12}$ & $-\frac{p-5}{12}+1$ & $-\frac{p-7}{12}$ & $-\frac{p-11}{12}-2$ \\ 
\hline
$C_a{^{^{^{^{}}}}_{_{_{}}}}$ & $-\frac{p-1}{12}$ & $-\frac{p-5}{12}+1$ & $-\frac{p-7}{12}-1$ & $-\frac{p-11}{12}-1$ \\ 
\hline
$D_a{^{^{^{^{}}}}_{_{_{}}}}$ & $-\frac{p-1}{12}$ & $-\frac{p-5}{12}$ & $-\frac{p-7}{12}$ & $-\frac{p-11}{12}-1$ \\ 
\hline
$E_a{^{^{^{^{}}}}_{_{_{}}}}$ &  $-\frac{p-1}{12}+1$ & $-\frac{p-5}{12}$ & $-\frac{p-7}{12}$ & $-\frac{p-11}{12}-1$ \\ 
\hline
\end{tabular}
\end{center}

So the theorem follows.
\end{proof}

The coefficients $c_{\theta}$ in the above argument also imply that, unlike the sum $S_{2,p}$, the single space $S_2(\Gamma(p))$ is usually not uniform (in the sense of \cite[2.15]{Lusztig_whiteBk}). For instance, we have:

\begin{coro}\label{corollary}
The character $\mathrm{Tr}(-,S_2(\Gamma(p)))$ of $G^F=\mathrm{SL}_2(\mathbb{F}_p)$ is not a linear combination of Deligne--Lusztig characters of $\mathrm{SL}_2(\mathbb{F}_p)$ if $p=23\mod 24$.
\end{coro}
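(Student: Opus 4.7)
The plan is to exhibit a parity obstruction coming from the non-uniform irreducibles of $\mathrm{SL}_2(\mathbb{F}_p)$. For $p = 23 \mod 24$ we have $p = 3 \mod 4$, hence $\alpha|_Z = 1$ on $T_a$ and the virtual character $-R_{T_a}^{\alpha}$ decomposes as $\pi_1 + \pi_2$, where $\pi_1, \pi_2$ are the two irreducible Weil-type constituents of degree $(p-1)/2$. These are the only non-uniform irreducible characters of $G^F$; their values lie in $\mathbb{Q}(\sqrt{-p})$, so complex conjugation swaps $\pi_1$ and $\pi_2$. In particular, a class function on $G^F$ lies in the span of Deligne--Lusztig characters if and only if $\pi_1$ and $\pi_2$ occur in it with the same multiplicity.

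Assuming for contradiction that $\chi := \mathrm{Tr}(-, S_2(\Gamma(p)))$ is such a linear combination, I would set $a := \langle \pi_1, \chi \rangle = \langle \pi_2, \chi \rangle$. Since $\overline{\chi}$ is the character of the dual $\overline{S_2(\Gamma(p))}$ and $\overline{\pi_1} = \pi_2$, we have $\langle \pi_1, \overline{\chi} \rangle = \langle \pi_2, \chi \rangle = a$, so the multiplicity of $\pi_1$ in $S_{2,p} = \chi + \overline{\chi}$ equals $2a$, which is even. It therefore suffices to compute this multiplicity directly and observe it is odd.

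Next I would evaluate $\langle \pi_1, S_{2,p} \rangle$ via Theorem~\ref{main theorem}. Among the Deligne--Lusztig characters $R_T^{\theta}$ with $\theta|_Z = 1$ appearing in that decomposition, only $R_{T_a}^{\alpha} = -\pi_1 - \pi_2$ contains $\pi_1$, with multiplicity $-1$; hence $\langle \pi_1, S_{2,p} \rangle = -c_\alpha$. To read off $c_\alpha$ I need to place $\alpha$ into one of the sets $A_a, B_a, C_a, D_a$ from the proof of Theorem~\ref{main theorem}. Since $T_a^F$ is cyclic of order $p+1$ and $24 \mid p+1$ when $p = 23 \mod 24$, the subgroups $\widetilde{G}_x \subseteq T_a^F$ (of order $4$) and $\widetilde{G}_y \subseteq T_a^F$ (of order $6$) both lie inside the index-$2$ subgroup $(T_a^F)^2$; thus $\alpha$ restricts trivially to each, so $\alpha \in B_a$ and $c_\alpha = -(p-11)/12 - 2$. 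Writing $p = 23 + 24k$, one obtains $-c_\alpha = 2k + 3$, which is odd, yielding the desired contradiction.

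The main non-routine input is the claim that complex conjugation swaps $\pi_1$ and $\pi_2$: this follows from the standard Gauss-sum description of the Weil-type constituents of $-R_{T_a}^\alpha$ when $p \equiv 3 \mod 4$. Beyond this, the argument is merely a coefficient read-off from the table in the proof of Theorem~\ref{main theorem} together with a short parity check modulo~$24$.
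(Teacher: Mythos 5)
Your proposal is correct and is essentially the paper's own argument: the paper likewise extracts the odd multiplicity of the non-uniform constituents from the coefficient table of Theorem~\ref{main theorem} and combines it with the facts that these constituents occur with equal multiplicity in any linear combination of Deligne--Lusztig characters and that complex conjugation interchanges them (so that $S_{2,p}=\chi+\overline{\chi}$ would force an even multiplicity). The only difference is that you spell out the parity bookkeeping and the placement of $\alpha$ in $B_a$ explicitly, and you work with $R_{T_a}^{\alpha}$, which is indeed the relevant torus for $p\equiv 3 \bmod 4$ (the paper's proof writes $R_{T_s}^{\alpha}$, apparently a slip).
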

\begin{proof}
From the argument of Theorem~\ref{main theorem} we see that the multiplicity of each irreducible constituent of $R_{T_s}^{\alpha}$ in $S_{2,p}$ is an odd integer. As these constituents are not linear combinations of the $R_{T}^{\theta}$'s and as $R_{T_s}^{\alpha}$ is a real character, the corollary follows.
\end{proof}

\begin{expl}
There is an accidental case: Let $p=7$, then $\mathrm{Tr}(-,S_2(\Gamma(7)))$ is an irreducible constituent of $R_{T_a}^{\alpha}$, hence not uniform. However, note that $\mathrm{PSL}_2(\mathbb{F}_7)\cong \mathrm{GL}_3(\mathbb{F}_2)$, so we can also view $\mathrm{Tr}(-,S_2(\Gamma(7)))$ as a character of $\mathrm{GL}_3(\mathbb{F}_2)$, of which it is a cuspidal Deligne--Lusztig character of degree $3$.
\end{expl}

\begin{coro}\label{corollary 2}
Suppose $p\geq 23$. An irreducible character $\rho$ of $G^F/Z=\mathrm{PSL}_2(\mathbb{F}_p)$ is a summand of $S_{2,p}$ if and only if $\rho\neq 1_{G^F/Z}$.
\end{coro}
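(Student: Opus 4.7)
The plan is to read off, from the explicit table in the proof of Theorem~\ref{main theorem}, the multiplicity in $S_{2,p}$ of every irreducible character of $G^F/Z = \mathrm{PSL}_2(\mathbb{F}_p)$ and verify that it is positive precisely when the character is non-trivial. I would first recall the classification of $\widehat{G^F/Z}$: besides $1_{G^F/Z}$ and $\mathrm{St}$, it consists of the irreducible principal series $\chi_\psi$, indexed by pairs $\{\psi, \psi^{-1}\}$ with $\psi|_Z = 1$ and $\psi \notin \{1, \alpha\}$; the irreducible discrete series indexed similarly on $T_a^F$; and the two constituents of $R_{T_s}^\alpha$ when $p \equiv 1 \bmod 4$, or of $-R_{T_a}^\alpha$ when $p \equiv 3 \bmod 4$. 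Combined with $R_{T_s}^1 = 1 + \mathrm{St}$, $R_{T_a}^1 = 1 - \mathrm{St}$, $R_{T_s}^\psi = \chi_\psi$ for non-special $\psi$, and $-R_{T_a}^\psi = \chi_\psi$ analogously, the multiplicity of each irreducible in $S_{2,p}$ becomes an explicit signed sum of the $c_\theta$'s.

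The ``only if'' direction reduces to computing that the multiplicity of $1_{G^F/Z}$, namely $c_{E_s} + c_{E_a}$, equals $0$ in each of the four columns of the table. (Alternatively one notes that there are no weight-$2$ cusp forms for $\mathrm{SL}_2(\mathbb{Z})$, hence no $\mathrm{SL}_2(\mathbb{F}_p)$-invariants in $S_2(\Gamma(p))$.) For the ``if'' direction I would handle the remaining families one at a time. The multiplicity of $\mathrm{St}$ is $c_{E_s} - c_{E_a}$, which for $r := p \bmod 12$ works out to $(p-r)/6$ up to an additive integer in $\{-2, 0, +2\}$ and is positive for $p \geq 13$. A principal-series irreducible $\chi_\psi$ with $\psi \in A_s \cup B_s \cup C_s \cup D_s$ has multiplicity $2c_\psi$ (since $\psi^{-1}$ lies in the same set), and a discrete-series irreducible has multiplicity $-2c_\psi$ read off the $T_a$ rows. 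To locate $\alpha$, I would observe that it is the unique order-$2$ character, hence non-trivial on the order-$2$ image of $\widetilde{G}_x$ and trivial on the order-$3$ image of $\widetilde{G}_y$ (because $\gcd(2,3)=1$); by Lemma~\ref{lemma 5} this places $\alpha$ in a $D_*$-type set for whichever of $T_s, T_a$ contains $\widetilde{G}_x$, and each constituent of the reducible $R_{T_s}^\alpha$ or $-R_{T_a}^\alpha$ then appears with multiplicity $\pm c_\alpha$ read off the appropriate $D_*$ row.

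The remaining work is pure bookkeeping: for each of the four residues $r \in \{1, 5, 7, 11\}$ and each of the ten entries of the table, one checks that either the coefficient has the correct sign (positive on the $T_s$ side, negative on the $T_a$ side) or the underlying set of characters is empty. The bound $p \geq 23$ is essentially sharp: the small negative corrections $-1, -2$ appearing in some entries can force a coefficient to vanish at a few small primes, but precisely in those cases the corresponding set is empty (for example $|B_s| = (p-1)/12 - 1 = 0$ at $p = 13$, while the next prime $p \equiv 1 \bmod 12$ is already $37$). Thus the finite case analysis is routine but is the only step where the hypothesis $p \geq 23$ actually enters, and it is the main obstacle worth flagging.
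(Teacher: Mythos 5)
Your proposal takes the same route as the paper: the paper's own proof of this corollary is a one\--line appeal to the table of coefficients $c_\theta$ in the proof of Theorem~\ref{main theorem}, and you are simply carrying out the bookkeeping it leaves implicit (decomposing each $R_T^\theta$ into irreducibles of $\mathrm{PSL}_2(\mathbb{F}_p)$ and checking signs). That bookkeeping is essentially right, but one step of your justification is incorrect as stated: the claim that $\alpha$, being the unique order\--$2$ character, is automatically non\--trivial on $\widetilde{G}_x$. The group $\widetilde{G}_x$ is cyclic of order $4$ (it is the preimage of the order\--$2$ group $G_x$, generated by an element of order $4$), and the order\--$2$ character of the cyclic group $T^F$ is trivial on its order\--$4$ subgroup exactly when $8$ divides $|T^F|$; so whether $\alpha$ kills $\widetilde{G}_x$ depends on $p \bmod 8$ (e.g.\ for $p\equiv 1 \bmod 8$ with $T=T_s$ it does). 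Consequently $\alpha$ does not always land in a $D_*$\--type set: depending on $p \bmod 24$ it can lie in $B_*$, $C_*$, $D_*$, or in the ``generic'' leftover class. Your conclusion nevertheless survives, because for $p\geq 23$ every one of these candidate rows has the required sign (positive on the $T_s$ side, negative on the $T_a$ side), but the case analysis for $\alpha$ must be run over all of them rather than over the $D_*$ row alone. A second, smaller slip: the multiplicity of $\mathrm{St}$ is $\tfrac{p-1}{6}-2$ when $p\equiv 1 \bmod 12$, which vanishes at $p=13$, so it is not ``positive for $p\geq 13$''; this is harmless here since $p\geq 23$ is assumed (and indeed the corollary genuinely fails at $p=13$, $17$, $19$, consistent with your remark that the bound is sharp).
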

\begin{proof}
This follows from the expressions of the $c_{\theta}$ in the argument of Theorem~\ref{main theorem}.
\end{proof}

\section{A further remark}

It would be interesting to know whether there are similar results for the principal congruence subgroups $\Gamma(p^r)$ for all $r\in\mathbb{Z}_{>0}$, in which case the representations of $\mathrm{SL}_2(\mathbb{Z}/p^r)\cong \mathrm{SL}_2(\mathbb{Z}_p/p^r)$ (i.e.\ the smooth representations of $\mathrm{SL}_2(\mathbb{Z}_p)$) are involved. Note that there are generalisations of Deligne--Lusztig theory to this setting; see e.g.\ \cite{Lusztig2004RepsFinRings}, \cite{Sta2009Unramified}, \cite{Sta2011ExtendedDL}, and \cite{Chen_2017_InnerProduct}. Also note that, while we are yet lacking of a good knowledge of values of the generalised Deligne--Lusztig characters, Weinstein's formula \eqref{formula 2} still holds, and there are also possible candidates of the Steinberg representation, like the ones in \cite{Lees1978Steinberg} and \cite{Camp2007Steinberg2}.

\bibliographystyle{alpha}
\bibliography{zchenrefs}

\end{document}